\theoremstyle{plain}
\newtheorem*{theorem*}{Theorem}
\newtheorem{theorem}{Theorem}[section] 
\newtheorem{lemma}[theorem]{Lemma}
\newtheorem{proposition}[theorem]{Proposition}
\newtheorem{corollary}[theorem]{Corollary}
\newtheorem{conjecture}[theorem]{Conjecture}
\theoremstyle{definition}
\newtheorem{definition}[theorem]{Definition}
\newtheorem{remark}[theorem]{Remark}
\newtheorem{example}[theorem]{Example}
\numberwithin{equation}{section}
\DeclareMathOperator{\sgn}{sgn}
\title{Equivalent symmetric kernels of determinantal point processes}
\author{Marco Stevens}
\affil{KU Leuven, Department of Mathematics, 
	
	Celestijnenlaan~200B box 2400, 3001 Leuven, Belgium. 
	
	E-mail:~{\tt marco.stevens@kuleuven.be}
}
\date{\today}                     %% if you don't need date to appear
\begin{document}
\maketitle

\begin{abstract}
Determinantal point processes are point processes whose correlation functions are given by determinants of matrices. The entries of these matrices are given by one fixed function of two variables, which is called the kernel of the point process. It is well-known that there are different kernels that induce the same correlation functions. We classify all the possible transformations of a kernel that leave the induced correlation functions invariant, restricting to the case of symmetric kernels.
\end{abstract}

\section{Introduction and main result}
\label{sectionintroduction}

Point processes appear in a wide variety of mathematical subjects that model the random behaviour of a discrete set of points, such as random matrix theory. Determinantal point processes \cite{Borodin, Johansson, Soshnikov} exhibit a particularly convenient algebraic and analytic structure that allows a rich analysis. Such processes are characterized by the fact that their correlation functions are of a determinantal form. To be more precise, if we write $\rho_n$ for the $n^\mathrm{th}$ correlation function of a certain given point process on some measure space $X$, then there is a function $K: X^2 \rightarrow  \mathbb{F}$ such that for any $n>0$ and any tuple $(x_1,\dots,x_n)\in X^n$, we have
\[\rho_n(x_1,\dots,x_n) = \det(K(x_i,x_j))_{i,j=1}^n.\]
Here, $\mathbb{F}$ is a suitable field, and $K$ is called the \textbf{correlation kernel} of the point process. For most (if not all) practical purposes, the field $\mathbb{F}$ is either the set of real or complex numbers.

The question that we (partially) answer in this paper is the following: to what extent is the correlation kernel $K$ unique? In other words, given a certain function $K: X^2\rightarrow \mathbb{F}$, can we classify all the functions $Q: X^2 \rightarrow \mathbb{F}$ such that we have
\begin{equation}
\label{eq:equivalentkernels}
\det(K(x_i,x_j))_{i,j=1}^n = \det(Q(x_i,x_j))_{i,j=1}^n
\end{equation}
for all $n>0$ and all tuples $(x_1,\dots,x_n)\in X^n$? If $K$ and $Q$ are related as in \eqref{eq:equivalentkernels}, we call $K$ and $Q$ \textbf{equivalent kernels}. In answering this question, we regard the functions $K$ and $Q$ as mere functions, not as kernels of determinantal point processes, although this is the original interest. Furthermore, we neglect the measure space structure of $X$ and just consider $X$ as a set. For what follows, it is mostly unimportant which field $\mathbb{F}$ is; only the case where $\mathbb{F}$ has characteristic 2 needs more care. Unless otherwise specified, $\mathbb{F}$ is an arbitrary field. Before one is interested in making a classification, one should know some natural examples.

\begin{example}
\label{ex:transposition} 
If $K: X^2 \rightarrow \mathbb{F}$ is a function, define
\[Q(x,y)=K(y,x) \qquad x,y \in X.\]
Then equation \eqref{eq:equivalentkernels} is fulfilled because of the basic identity $\det(A^T)=\det(A)$ for any matrix $A$, where $A^T$ is the transpose of $A$. We refer to this transformation $K\mapsto Q$ as \textbf{transposition}.
\end{example}

\begin{example}
\label{ex:conjugation}
For a slightly more involved example, take a function $g: X \rightarrow \mathbb{F}\setminus \{0\}$ and define
\begin{equation}
\label{eq:gaugetransform}
Q(x,y) = g(x)g(y)^{-1} K(x,y), \qquad x,y \in X.
\end{equation}
Evaluating both sides of \eqref{eq:equivalentkernels} as sums over permutations, one sees that $K$ and $Q$ are equivalent. We call this transformation the \textbf{conjugation transformation} and $g$ the \textbf{conjugation function}.
\end{example}

\begin{remark}
\label{rem:cocycle}
The essential property of the conjugation transformation is the fact that the 2-variable function $c(x,y)=g(x)g(y)^{-1}$ is an example of a \textbf{cocycle}; that means that for all $r\geq 1$ and all tuples $(z_1,z_2,\dots,z_r)\in X^r$, one has that
\begin{equation}
\label{eq:cocyclecondition}
c(z_1,z_2)\cdot c(z_2,z_3) \cdots c(z_{r-1},z_r) \cdot c(z_r,z_1)=1.
\end{equation}
If $c$ is an arbitrary cocycle, then one can define
\begin{equation}
\label{eq:coycletransform}
Q(x,y)=c(x,y)K(x,y),
\end{equation}
and similarly as for the conjugation transformation conclude that $Q$ and $K$ are equivalent. However, one easily shows that for any cocycle $c$, there is a function $g: X\rightarrow \mathbb{F}\setminus \{0\}$ such that $c(x,y)=g(x)g(y)^{-1}$. Simply choose any point $x_0\in X$ and define $g(x)=c(x,x_0)$. Therefore, the a priori more general cocycle transformation \eqref{eq:coycletransform} does not yield anything new.
\end{remark}

The transposition and conjugation transformations are canonical; in fact, the conjecture \cite{Bufetov_personal} is the following.
\begin{conjecture}
If $K$ and $Q$ are equivalent kernels as in \eqref{eq:equivalentkernels}, then they can be transformed into one another by transposition and conjugation transformations.
\end{conjecture}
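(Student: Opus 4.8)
\medskip\noindent\textbf{Proof strategy.}
I do not expect to prove the conjecture in full; what follows is the approach I would take, and which I expect to settle the symmetric case that is the subject of this paper. The starting point is to reorganize condition~\eqref{eq:equivalentkernels} by expanding each determinant over permutations and grouping terms according to the cycle type of the permutation. For a finite subset $T\subseteq X$, set
\[
 c(T;K)=\sum_{\tau}\ \prod_{x\in T}K\bigl(x,\tau(x)\bigr),
\]
the sum running over the cyclic permutations $\tau$ of $T$. Then one has the standard identity $\det\bigl(K(x_i,x_j)\bigr)_{i,j\in T}=\sum_{P}\prod_{S\in P}(-1)^{|S|-1}c(S;K)$, the outer sum over set partitions $P$ of $T$. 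Inducting on $|T|$, one sees that $K$ and $Q$ are equivalent \emph{if and only if} $c(T;K)=c(T;Q)$ for every finite $T\subseteq X$: at stage $|T|=n$ all partitions other than $\{T\}$ involve only blocks of size $<n$, for which the $c$'s already agree, so the leading term is forced. This reduces the whole problem to the cyclic products $c(T;\cdot)$.

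The small cases pin down the shape of $Q$. From $|T|=1$ we get $K(x,x)=Q(x,x)$; from $|T|=2$, $K(x,y)K(y,x)=Q(x,y)Q(y,x)$. In the symmetric setting, where $Q$ too is symmetric, the latter says $K(x,y)^2=Q(x,y)^2$; hence $K$ and $Q$ have the same support, and $Q(x,y)=\varepsilon(x,y)K(x,y)$ for a symmetric sign $\varepsilon(x,y)\in\{+1,-1\}$ defined on the edges of the \textbf{support graph} $G$ (the graph on vertex set $X$ with $xy$ an edge iff $K(x,y)\neq 0$). From $|T|=3$, after cancelling a factor~$2$ --- this is exactly where the characteristic-$2$ case must be excluded --- one obtains the triangle relation $\varepsilon(x,y)\varepsilon(y,z)\varepsilon(z,x)=1$ whenever $x,y,z$ form a triangle of $G$. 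The goal is now to show that $\varepsilon$ is a coboundary, i.e. $\varepsilon(x,y)=\eta(x)\eta(y)$ for some $\eta\colon X\to\{\pm 1\}$; granting this, $Q(x,y)=\eta(x)\eta(y)K(x,y)$ is precisely a conjugation transformation, which proves the conjecture in the symmetric case (transposition being vacuous on symmetric kernels).

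To pass from the triangle relation to vanishing monodromy of $\varepsilon$ around \emph{every} cycle of $G$, I would induct on the cycle length. A cycle with a chord in $G$ decomposes into two shorter cycles glued along that chord, and since $\varepsilon^2\equiv 1$ the monodromies multiply, so the inductive hypothesis applies. For a chordless cycle $y_1-y_2-\cdots-y_n-y_1$ one uses \eqref{eq:equivalentkernels} at the points $y_1,\dots,y_n$: because the cycle is chordless, the only cyclic permutations $\tau$ with $\prod_i K(y_i,\tau(y_i))\neq 0$ are the two rotations, so by symmetry $c(\{y_i\};K)=2\prod_i K(y_i,y_{i+1})$, and likewise for $Q$ (whose support coincides with that of $K$); equating and cancelling --- again using characteristic $\neq 2$, together with $\prod_i K(y_i,y_{i+1})\neq 0$ --- gives $\prod_i\varepsilon(y_i,y_{i+1})=1$. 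With the monodromy trivial around all cycles, one fixes a spanning forest of $G$ and defines $\eta$ by propagating outward from the roots, exactly as in Remark~\ref{rem:cocycle}; the choice is consistent precisely because every fundamental cycle has trivial monodromy, and on vertices isolated in $G$ one simply sets $\eta\equiv 1$.

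The main obstacle --- and the reason the conjecture is hard in general --- lies in dropping symmetry. Without it the two orientations of an $n$-cycle contribute different products to $c(T;\cdot)$, so this quantity constrains only their sum (and, through all $|T|$, the symmetric functions of the various cyclic products), and one can no longer extract a pointwise relation between $K$ and $Q$: the transposition transformation must be interleaved with conjugation in a way that is not locally determined, and the cycle-length induction above has no evident analogue. I would expect the non-symmetric conjecture to require a genuinely global argument, and even the symmetric characteristic-$2$ case to need separate treatment, since the cancellations of~$2$ at the $|T|=3$ and chordless-cycle steps are then unavailable and additional equivalent kernels can appear.
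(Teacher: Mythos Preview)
The statement is a conjecture that the paper does not prove in full; both you and the paper restrict to the symmetric case (the paper's Theorem~\ref{thm:maintheorem}), and on that your argument is correct and runs parallel to the paper's. Both build the support graph, show that the sign function $\varepsilon$ (the paper's transition function $S$) has trivial monodromy around every cycle by inducting on length---splitting along a chord when one exists, and computing directly at a chordless cycle---and then integrate to a potential $\eta$ (the paper's $g$) along a spanning forest, equivalently along paths from chosen base points. The one genuine organizational difference is your preliminary reduction to the cycle-sum identities $c(T;K)=c(T;Q)$: this is a cleaner packaging than the paper's, which at the chordless-cycle step expands the full determinant over $S_n$ and must argue separately that the contributions from permutations built only of $1$- and $2$-cycles already match on the two sides (via \eqref{eq:firstfundamental}--\eqref{eq:secondfundamental}) before isolating the two $n$-cycle rotations. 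Your $c(T;\cdot)$ formalism has stripped those contributions off in advance, so only the $n$-cycles are ever in play. Conversely, the paper is slightly more explicit in treating non-simple cycles as a separate case of the induction, which you silently absorb by only needing simple cycles for the coboundary construction.

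One correction to your closing remarks: the symmetric characteristic-$2$ case does not admit additional equivalent kernels. In characteristic~$2$ the relation $K(x,y)^2=Q(x,y)^2$ already forces $K(x,y)=Q(x,y)$, since squaring is injective in any field; the paper records this as Corollary~\ref{cor:char2}. The factor-of-$2$ cancellations you worry about are simply not needed there.
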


There is no known strategy to solve this conjecture in its full generality. However, if one considers primary examples of determinantal point processes, one observes that many of their kernels are in fact \textbf{symmetric}, which means that $K(x,y)=K(y,x)$ for all $x,y \in X$. This is true for the Christoffel-Darboux kernel associated to a sequence of orthogonal polynomials, but also for the Sine, Airy and Bessel kernel that arise as universal kernels for the asymptotic behaviour in random matrices and related subjects \cite{Borodin, Bufetov_rigidity, Deift_Kriecherbauer_McLaughlin_Venakides_Zhou,Johansson,Kuijlaars_2004, Soshnikov, TracyWidom}.

Therefore, we restrict our analysis to the case where both $K$ and $Q$ are symmetric. Of course, in the case that $K$ and $Q$ are symmetric, the matrices that appear in \eqref{eq:equivalentkernels} are symmetric as well, so the transposition transformation that is discussed in Example \ref{ex:transposition} now trivializes to the identity. It is also not possible to use every conjugation function without violating the symmetry: up to overall scalar multiplication, the only conjugation functions $g$ that one can take are those that take values in $\{\pm 1\}$. Our main result says precisely that the conjugation transformations are the only possible transformations that yield an equivalent kernel.

\begin{theorem}\label{thm:maintheorem}
Suppose that $X$ is a set, let $\mathbb{F}$ be a field and let $K,Q: X^2 \rightarrow \mathbb{F}$ be symmetric kernels. Then $K$ and $Q$ are equivalent (i.e., equation \eqref{eq:equivalentkernels} holds) if and only if there is a conjugation function $g: X \rightarrow \{\pm 1\}$ such that \eqref{eq:gaugetransform} holds.
\end{theorem}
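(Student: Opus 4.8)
The ``if'' direction is exactly Example~\ref{ex:conjugation} together with Remark~\ref{rem:cocycle}: a function $g\colon X\to\{\pm1\}$ satisfies $g(y)^{-1}=g(y)$, so \eqref{eq:gaugetransform} produces a symmetric kernel equivalent to $K$. So the plan is to prove the ``only if'' direction. First I would extract the low-order consequences of equivalence: taking $n=1$ in \eqref{eq:equivalentkernels} gives $K(x,x)=Q(x,x)$ for all $x$, and taking $n=2$ and using symmetry gives $K(x,x)K(y,y)-K(x,y)^2=Q(x,x)Q(y,y)-Q(x,y)^2$, hence $K(x,y)^2=Q(x,y)^2$ for all $x,y$. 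If $\mathbb F$ has characteristic $2$ this already finishes the argument, since the Frobenius map is injective on a field, so $K(x,y)^2=Q(x,y)^2$ forces $K=Q$ and $g\equiv1$ works. From now on assume $\operatorname{char}\mathbb F\neq2$. Then for every pair $x,y$ with $K(x,y)\neq0$ one can define $\varepsilon(x,y)=Q(x,y)K(x,y)^{-1}\in\{\pm1\}$; it is symmetric and $\varepsilon(x,x)=1$ whenever $K(x,x)\neq0$. The goal is to show $\varepsilon(x,y)=g(x)g(y)$ for a single sign function $g$, since then $Q(x,y)=g(x)g(y)K(x,y)$ holds on the set where $K\neq0$, while on its complement both sides vanish (by the $n=2$ relation) and on the diagonal it reduces to the $n=1$ relation together with $g(x)^2=1$.

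Introduce the graph $G$ on vertex set $X$ with an edge $\{x,y\}$ whenever $x\neq y$ and $K(x,y)\neq0$. The key claim is that for every chordless cycle $y_0,y_1,\dots,y_{k-1},y_0$ of $G$ one has $\prod_{i=0}^{k-1}\varepsilon(y_i,y_{i+1})=1$ (indices mod $k$). To prove it, apply \eqref{eq:equivalentkernels} to the $k$-tuple $(y_0,\dots,y_{k-1})$. Because the cycle is chordless, the matrix $M=(K(y_i,y_j))_{i,j}$ satisfies $M_{ij}=0$ unless $i=j$ or $j\equiv i\pm1\bmod k$. In the expansion $\det M=\sum_{\sigma\in S_k}\sgn(\sigma)\prod_iM_{i,\sigma(i)}$, only permutations with $\sigma(i)\in\{i-1,i,i+1\}$ for all $i$ contribute; for such a $\sigma$, a transposition $(i,i+1)$ in its cycle decomposition contributes a factor $K(y_i,y_{i+1})^2$ and a fixed point contributes a diagonal factor, all of which are unchanged when $K$ is replaced by $Q$. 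A cycle of length $\geq3$ in the decomposition of such a $\sigma$ would be a closed walk with unit steps on $\mathbb Z/k$ visiting distinct residues, which forces it to be a full rotation; hence the only permutations whose monomials differ between $K$ and $Q$ are the two $k$-cycles $i\mapsto i\pm1$, each of sign $(-1)^{k-1}$ and each with monomial $\prod_iK(y_i,y_{i+1})$. Thus $\det M$ and its $Q$-analogue differ by $2(-1)^{k-1}\bigl(\prod_iK(y_i,y_{i+1})-\prod_iQ(y_i,y_{i+1})\bigr)$, which vanishes; since $2\neq0$ and every $K(y_i,y_{i+1})$ is nonzero, dividing gives $\prod_i\varepsilon(y_i,y_{i+1})=1$.

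It remains to upgrade this to a genuine coboundary. The holonomy $\varphi(S)=\prod_{e\in S}\varepsilon(e)$, defined on the cycle space $Z(G)$ over $\mathbb F_2$ (the even subgraphs), is a homomorphism $Z(G)\to\{\pm1\}$, since overlapping edges contribute $\varepsilon(e)^2=1$. The cycle space of any graph is generated by its chordless cycles: every even subgraph is an edge-disjoint union of simple cycles, and a simple cycle with a chord $e$ of $G$ is the $\mathbb F_2$-sum of the two strictly shorter simple cycles obtained by adding $e$ to each of the two arcs, so induction on length reduces any simple cycle to chordless ones. Hence $\varphi$ is trivial by the previous paragraph, so $\varepsilon$ is a coboundary: choosing a spanning forest of $G$ with one root per component and setting $g(x)$ to be the $\varepsilon$-product along the tree path from the root to $x$, one checks $g(x)g(y)=\varepsilon(x,y)$ on tree edges directly and on the remaining edges because the associated fundamental cycle has trivial holonomy. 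Setting $g\equiv1$ on isolated vertices yields the required $g\colon X\to\{\pm1\}$ with $Q(x,y)=g(x)g(y)K(x,y)$ for all $x,y$.

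The ``if'' direction, the low-order reductions, and the final coboundary extraction are all routine; the crux is the second paragraph. The point I expect to be most delicate is realizing that the relations coming from $n\le3$ --- equal diagonals, $K(x,y)^2=Q(x,y)^2$, and the triangle identity $\varepsilon(x,y)\varepsilon(y,z)\varepsilon(z,x)=1$ --- do \emph{not} suffice: a chordless $4$-cycle on which $\varepsilon$ has product $-1$ satisfies all of them but is not a coboundary. One is therefore forced to use determinants of unbounded size, and then to carry out the permutation bookkeeping carefully enough to see that, comparing $\det(K(y_i,y_j))$ with $\det(Q(y_i,y_j))$ on a chordless cycle, precisely the two ``rotation'' permutations survive.
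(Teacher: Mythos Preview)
Your proof is correct and follows essentially the same route as the paper: the determinant expansion on a chordless cycle to show the sign product is $1$ is exactly the paper's ``third case,'' your reduction to chordless cycles via the $\mathbb F_2$-cycle space is a repackaging of the paper's inductive ``first and second cases,'' and your spanning-forest construction of $g$ is equivalent to the paper's base-point-and-path construction. The only differences are cosmetic---you phrase things in cohomological language (holonomy, coboundary) where the paper argues by explicit induction on cycle length---and your closing remark that the $n\le3$ relations alone do not suffice is a nice observation not made explicit in the paper.
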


We note that by the above one of the implications in this statement is clear; the existence of the conjugation function $g$ such that \eqref{eq:gaugetransform} holds implies that the kernels $K$ and $Q$ are equivalent. The main point of the proof of this theorem is hence to construct the conjugation function $g$ for any given pair of equivalent kernels $K$ and $Q$.

\begin{remark}
If $\mathbb{F}$ is a field of characteristic 2, then $1=-1$. In this case, the conjugation transformation also trivializes, and Theorem \ref{thm:maintheorem} actually states that two symmetric kernels $K$ and $Q$ are equivalent if and only if $K=Q$. Fields with characteristic 2 need special attention in what follows; see Corollary \ref{cor:char2} and in the proof of Proposition \ref{prop:loopcondition}.
\end{remark}

\begin{remark}
\label{rem:pm1andowninverse}
Note that the set $\{\pm 1\}$ is precisely the set of all elements that are their own inverse. We use this characterization of $\{\pm 1\}$ later on.
\end{remark}

\begin{remark}
\label{rem:uniquenessg}
The conjugation function $g$ that establishes \eqref{eq:gaugetransform} is not unique. Namely, if one would define $\tilde{g}(x)=-g(x)$, one immediately sees that $\tilde{g}$ is also a suitable conjugation function. In Remark \ref{rem:definiteuniqueness}, we give a complete classification of all the conjugation functions $g$ that yield the same conjugation transformation.
\end{remark}

\begin{remark}
\label{rem:resultintermsofactions}
Our main theorem can be formulated in terms of group actions. Namely, observe that the set 
\[G=\{g: X \rightarrow \{\pm 1\}\}\] 
is a group under pointwise multiplication. This group $G$ acts on the space
\[\operatorname{SymKer}(X)=\{K: X^2 \rightarrow \mathbb{F} \mid K \textrm{ is symmetric}\}\]
of symmetric kernels via the conjugation transformation \eqref{eq:gaugetransform}. The fact that conjugation transformations yield equivalent kernels then means that an orbit of this group action lies within one and the same equivalence class of the equivalence relation \eqref{eq:equivalentkernels}, i.e., we have the natural surjection
\[\operatorname{SymKer}(X) / G \twoheadrightarrow \operatorname{SymKer}(X) / \sim ,\]
where $\sim$ denotes the equivalence relation given by \eqref{eq:equivalentkernels}. Theorem \ref{thm:maintheorem} makes this stronger, by stating that this surjection is in fact an isomorphism, i.e., that the orbits of the action of $G$ are precisely the equivalence classes of \eqref{eq:equivalentkernels}.
\end{remark}

The rest of this article is concerned with the proof of Theorem \ref{thm:maintheorem} and the structure of the text reveals the steps in the proof. In Section \ref{sectionsetup} we study a pair of equivalent symmetric kernels $K, Q: X^2 \rightarrow \mathbb{F}$ and derive their first shared properties. Most importantly, we define a function $S$ that plays a similar role as the cocycle $c$ in \eqref{eq:coycletransform}, but is only defined on an appropriate subset of $X^2$. In order to deal with the fact that this function $S$ is not defined on all elements of $X^2$, we introduce (what we call) the \textit{equivalent kernel graph} in Section \ref{sectiongraph}. There, we also recall some notation and concepts from graph theory. Subsequently, in Section \ref{sectioncocyclecondition}, we prove the analogue of the defining condition \eqref{eq:cocyclecondition} of cocycles for the function $S$ in terms of the equivalent kernel graph. Finally, in Section \ref{sectiongauge}, we show that this condition implies the existence of a conjugation function $g$ such that \eqref{eq:gaugetransform} holds, and thereby we prove the remaining implication in the statement of Theorem \ref{thm:maintheorem}.

\section{The transition function}
\label{sectionsetup}

Throughout the rest of this article, we assume that $K$ and $Q$ are equivalent symmetric kernels, i.e., that \eqref{eq:equivalentkernels} holds. The goal of the rest of this article is to show that $K$ and $Q$ are in fact related by a conjugation transformation. This assumption implies the following fundamental lemma.
\begin{lemma}
For all $x,y\in X$, we have that
\begin{align}
K(x,x) &= Q(x,x), \label{eq:firstfundamental} \\
K(x,y)^2 &= Q(x,y)^2. \label{eq:secondfundamental}
\end{align}
\end{lemma}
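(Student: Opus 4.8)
The plan is to derive both identities by specializing the equivalence \eqref{eq:equivalentkernels} to the smallest possible tuples, namely $n=1$ and $n=2$. For \eqref{eq:firstfundamental}, I would simply take $n=1$ and the one-element tuple $(x)$; then \eqref{eq:equivalentkernels} reads $K(x,x)=\det(K(x,x))=\det(Q(x,x))=Q(x,x)$, which is exactly the claim. This requires no symmetry and is immediate.

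For \eqref{eq:secondfundamental}, I would take $n=2$ and the tuple $(x,y)$ with $x\neq y$ (the case $x=y$ being already covered by \eqref{eq:firstfundamental}). Expanding the two $2\times 2$ determinants gives
\[
K(x,x)K(y,y)-K(x,y)K(y,x)=Q(x,x)Q(y,y)-Q(x,y)Q(y,x).
\]
Now I would invoke \eqref{eq:firstfundamental} to cancel the diagonal terms $K(x,x)K(y,y)=Q(x,x)Q(y,y)$, leaving $K(x,y)K(y,x)=Q(x,y)Q(y,x)$. Finally, applying symmetry of both kernels, $K(x,y)=K(y,x)$ and $Q(x,y)=Q(y,x)$, this becomes $K(x,y)^2=Q(x,y)^2$, as desired.

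There is essentially no obstacle here: the lemma is a direct consequence of evaluating the defining relation on tuples of length one and two, and the only subtlety is the bookkeeping order — one must establish \eqref{eq:firstfundamental} first so that it can be used to cancel the diagonal contributions in the $2\times 2$ expansion. The symmetry hypothesis is used only at the very last step to pass from $K(x,y)K(y,x)$ to $K(x,y)^2$. I would present the two parts in this order as a short, self-contained argument.
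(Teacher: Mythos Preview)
Your proposal is correct and follows exactly the same route as the paper: specialize \eqref{eq:equivalentkernels} to $n=1$ for \eqref{eq:firstfundamental}, then to $n=2$, expand the $2\times 2$ determinants, cancel the diagonal products using \eqref{eq:firstfundamental}, and invoke symmetry to obtain \eqref{eq:secondfundamental}. The only differences are expository (you separate out the trivial case $x=y$ and comment on the order of the two parts), not mathematical.
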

\begin{proof}
For \eqref{eq:firstfundamental}, let $x\in X$ and specify \eqref{eq:equivalentkernels} to the case $n=1$. For \eqref{eq:secondfundamental}, we specify to the case $n=2$ and $(x,y)\in X^2$, to obtain
\[K(x,x)K(y,y)-K(x,y)K(y,x)=Q(x,x)Q(y,y)-Q(x,y)Q(y,x) \qquad x,y\in X.\]
Now use the symmetry of both $K$ and $Q$ and \eqref{eq:firstfundamental} to obtain that \eqref{eq:secondfundamental} holds.
\end{proof}

The relation \eqref{eq:secondfundamental} motivates the following definition.
\begin{definition}
The \textbf{zero set} of the two equivalent kernels $K$ and $Q$ is given by
\begin{equation}\label{eq:defzeroset}
Z=\{(x,y)\in X^2 \mid K(x,y)=Q(x,y)=0\}.
\end{equation}
Furthermore, the function $S: X^2 \setminus Z \rightarrow \mathbb{F}\setminus \{0\}$ is defined by
\begin{equation}\label{eq:defS}
S(x,y)=Q(x,y) K(x,y)^{-1},
\end{equation}
and is called the \textbf{transition function} from $K$ to $Q$.
\end{definition}

To prove Theorem \ref{thm:maintheorem}, we show that the transition function $S$ satisfies (most of) the properties of the cocycle $c$ as in Remark \ref{rem:cocycle}. The main issue for this is the fact that the transition function $S$ is only defined on $X^2 \setminus Z$. Hence the cocycle condition \eqref{eq:cocyclecondition} that holds for any tuple $(x_1,x_2,\dots,x_n)\in X^n$ should be replaced by only requiring that
\begin{equation}
\label{eq:admissiblecycles}
S(x_1,x_2) \cdot S(x_2,x_3) \cdots S(x_{n-1},x_n) \cdot S(x_n,x_1)=1,
\end{equation}
whenever all the factors are defined; this is what we prove in Section \ref{sectioncocyclecondition} and what is the main ingredient for the proof of our main theorem in Section \ref{sectiongauge}. To work properly with the domain where all factors of \eqref{eq:admissiblecycles} are defined, we introduce the \textit{equivalent kernel graph} in Section \ref{sectiongraph}. Now, we first state the basic properties of the transition function. For this, we note that by the symmetry of $K$ and $Q$, we have that $Z$ is symmetric, in the sense that $(x,y)\in Z$ if and only if $(y,x)\in Z$.

\begin{lemma}
\label{lem:propertiesS}
Suppose that $K$ and $Q$ are equivalent symmetric kernels and let $S$ be the transition function from $K$ to $Q$ as in \eqref{eq:defS}. Then we have that
\begin{align}
S(x,y)&=S(y,x), & \textrm{ for all } (x,y)\in X^2\setminus Z, \label{eq:symmetryS} \\
S(x,y)&\in \{\pm 1\},  & \textrm{ for all } (x,y)\in X^2\setminus Z, \label{eq:valuesofS} \\
S(x,x) &=1, & \textrm{ if } (x,x)\in X^2 \setminus Z. \label{eq:Sondiagonal}
\end{align}
\end{lemma}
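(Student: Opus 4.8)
The plan is to read off all three identities from the fundamental relations \eqref{eq:firstfundamental}--\eqref{eq:secondfundamental}, the symmetry of $K$ and $Q$, and the fact that a field has no zero divisors. The first step, and really the only one requiring a moment's thought, is to check that the formula \eqref{eq:defS} is meaningful on all of $X^2\setminus Z$: the condition $(x,y)\notin Z$ only says that $K(x,y)$ and $Q(x,y)$ do not both vanish, whereas $Q(x,y)K(x,y)^{-1}$ needs $K(x,y)\neq 0$. I would resolve this by invoking \eqref{eq:secondfundamental}: if $K(x,y)=0$ then $Q(x,y)^2=0$, hence $Q(x,y)=0$ because $\mathbb{F}$ is an integral domain, so $(x,y)\in Z$. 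Contrapositively, both $K$ and $Q$ are nonzero at every point of $X^2\setminus Z$; this simultaneously shows that $S$ is well defined and that its values lie in $\mathbb{F}\setminus\{0\}$.

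Granting that, I would prove \eqref{eq:symmetryS} first: $Z$ is symmetric, so $(y,x)\in X^2\setminus Z$ as well, and the symmetry of $K$ and $Q$ immediately yields $S(y,x)=Q(y,x)K(y,x)^{-1}=Q(x,y)K(x,y)^{-1}=S(x,y)$. Next, for \eqref{eq:valuesofS} I would divide \eqref{eq:secondfundamental} through by $K(x,y)^2\neq 0$ to obtain $S(x,y)^2=1$; since the polynomial $t^2-1=(t-1)(t+1)$ has only the roots $\pm 1$ in a field, this gives $S(x,y)\in\{\pm 1\}$ (with the two values coinciding in characteristic $2$). Finally, \eqref{eq:Sondiagonal} follows from \eqref{eq:firstfundamental}: if $(x,x)\notin Z$ then $K(x,x)\neq 0$ and $S(x,x)=Q(x,x)K(x,x)^{-1}=K(x,x)K(x,x)^{-1}=1$.

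There is no real obstacle here; the only point that could trip one up is forgetting that $X^2\setminus Z$ is a priori genuinely larger than $\{(x,y)\mid K(x,y)\neq 0\}$, which is exactly why the absence of zero divisors in $\mathbb{F}$ is needed at the outset. Everything else is a one-line computation, and none of it is sensitive to the characteristic of $\mathbb{F}$.
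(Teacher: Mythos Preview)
Your argument is correct and follows essentially the same route as the paper: symmetry from the symmetry of $K$ and $Q$, the value set $\{\pm 1\}$ from \eqref{eq:secondfundamental} via $S(x,y)^2=1$, and the diagonal value from \eqref{eq:firstfundamental}. Your extra first paragraph, verifying that $K(x,y)\neq 0$ whenever $(x,y)\notin Z$ so that \eqref{eq:defS} is genuinely defined on all of $X^2\setminus Z$, is a welcome clarification that the paper leaves implicit.
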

\begin{proof}
The symmetry follows directly from the symmetry of $K$ and $Q$ and the definition \eqref{eq:defS} of the transition function. By \eqref{eq:secondfundamental} it follows that for all $(x,y)\in X^2 \setminus Z$, we have $S(x,y)^2=1$. Therefore, by Remark \ref{rem:pm1andowninverse}, $S(x,y)\in \{\pm 1\}$. The last statement of the lemma follows directly from \eqref{eq:firstfundamental}.
\end{proof}

We can immediately deduce the following if $\mathbb{F}$ has characteristic 2.

\begin{corollary}
\label{cor:char2}
If $\mathbb{F}$ has characteristic 2, and $K,Q: X^2 \rightarrow \mathbb{F}$ are equivalent symmetric kernels, then $K=Q$.
\end{corollary}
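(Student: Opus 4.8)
The plan is straightforward: in characteristic $2$, the set $\{\pm 1\}$ collapses to the single element $\{1\}$, so the properties of the transition function already proved in Lemma \ref{lem:propertiesS} force $S$ to be the constant function $1$ on its entire domain $X^2 \setminus Z$. I would first invoke \eqref{eq:valuesofS}, which states $S(x,y) \in \{\pm 1\}$ for all $(x,y) \in X^2 \setminus Z$; since $\operatorname{char} \mathbb{F} = 2$ gives $1 = -1$, this means $S(x,y) = 1$ for all $(x,y) \in X^2 \setminus Z$.

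Unwinding the definition \eqref{eq:defS} of $S$, the equation $S(x,y) = 1$ says precisely that $Q(x,y) K(x,y)^{-1} = 1$, i.e. $Q(x,y) = K(x,y)$, for every pair $(x,y) \notin Z$. On the other hand, if $(x,y) \in Z$, then by the definition \eqref{eq:defzeroset} of the zero set we have $K(x,y) = Q(x,y) = 0$, so trivially $K(x,y) = Q(x,y)$ there as well. Combining the two cases, $K(x,y) = Q(x,y)$ for all $(x,y) \in X^2$, which is the claim.

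There is essentially no obstacle here; the corollary is an immediate consequence of Lemma \ref{lem:propertiesS} together with the degeneracy $1 = -1$ in characteristic $2$. The only thing worth stating carefully is the case split between pairs in $Z$ and pairs outside $Z$, since $S$ is only defined on the complement of $Z$ — but on $Z$ the desired equality is automatic from the definition of $Z$. I would write the proof in two or three sentences along exactly these lines.
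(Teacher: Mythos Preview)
Your proposal is correct and follows essentially the same argument as the paper: invoke Lemma~\ref{lem:propertiesS} to get $S(x,y)\in\{\pm1\}=\{1\}$ on $X^2\setminus Z$, then unwind the definition of $S$ to conclude $K=Q$. Your explicit case split between $Z$ and its complement is a minor elaboration of what the paper compresses into ``it immediately follows,'' but the approach is the same.
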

\begin{proof}
Since in $\mathbb{F}$ we have that $-1=1$, we have that $S(x,y)=1$ for all $(x,y)\in X^2 \setminus Z$ by Lemma \ref{lem:propertiesS}. From this, by \eqref{eq:defS} it immediately follows that $K=Q$.
\end{proof}

\section{The equivalent kernel graph}
\label{sectiongraph}

As mentioned above, we introduce a graph to deal with the fact that the transition function $S$ is only defined on points $(x,y)\in X^2 \setminus Z$.

\begin{definition}\label{def:kernelgraph}
The \textbf{equivalent kernel graph} $G$ is an undirected graph with the elements of $X$ as vertices and an edge between $x$ and $y$ if and only if $(x,y)\in X^2 \setminus Z$, i.e., if $K(x,y)\neq 0$.
\end{definition}

Note that in the definition of $G$, the requirement of the existence of an edge between $x$ and $y$ is symmetrical in $x$ and $y$, precisely since $K$ and $Q$ are symmetric kernels. Furthermore, in the situation that $Z=\emptyset$, the graph $G$ is a complete graph.

Since the notation and nomenclature that are used in graph theory varies from source to source, we explicitly state some of the notions as we use them. For this, let $G$ be any graph. A \textbf{path} from $x$ to $y$ in $G$ is a finite vector $p=\{p_i\}_{i=0}^n$ such that $p_0=x$ and $p_n=y$, and there is an edge between $p_i$ and $p_{i+1}$ in $G$. We call $n$ the \textbf{length} of the path $p$. A \textbf{simple path} is a path with distinct vertices, possibly except the starting and ending point. A \textbf{cycle} in a point $x\in X$ is a path from $x$ to itself. Then naturally, a simple cycle is a path that is both simple and a cycle.

If $p=\{p_i\}_{i=0}^n$ is a simple cycle, then $p$ is called \textbf{chordless} if every edge that connects two vertices in the cycle is already in the cycle itself. More explicitly, $p=\{p_i\}_{i=0}^n$ is chordless if  the existence of an edge between $p_i$ and $p_j$ implies that $i=j\pm 1$, $(i,j)=(0,n)$ or $(i,j)=(n,0)$.

\begin{remark}
The equivalent kernel graph $G$ is defined precisely in such a way that all factors in the left hand side of \eqref{eq:admissiblecycles} are defined if and only if $\{x_i\}_{i=1}^n$ is a cycle in $G$. Therefore, the cocycle condition \eqref{eq:cocyclecondition} is replaced by a requirement for all cycles in the graph $G$, see Proposition \ref{prop:loopcondition}.
\end{remark}

\section{The analogue of the cocycle condition}
\label{sectioncocyclecondition}

For any path $p=\{p_i\}_{i=0}^n$ in the graph $G$, we define
\begin{equation}\label{eq:defS[p]}
S[p]=\prod_{i=1}^n S(p_{i-1},p_i),
\end{equation}
and similarly we define $K[p]$ and $Q[p]$ for any path.
\begin{proposition}\label{prop:loopcondition}
For any cycle $p$ in the equivalent kernel graph $G$, we have $S[p]=1$.
\end{proposition}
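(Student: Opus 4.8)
The plan is to argue by induction on the length $n$ of the cycle $p=\{p_i\}_{i=0}^n$, reading all indices modulo $n$. For $n=0$ there is nothing to prove; for $n=1$ the cycle is a loop at a single vertex $x$, so $(x,x)\in X^2\setminus Z$ and $S[p]=S(x,x)=1$ by \eqref{eq:Sondiagonal}; for $n=2$ we get $S[p]=S(p_0,p_1)S(p_1,p_0)=S(p_0,p_1)^2=1$ by \eqref{eq:symmetryS} and \eqref{eq:valuesofS}. For the inductive step take $n\ge 3$ and first suppose $p$ is \emph{not} a chordless simple cycle. If $p$ visits some vertex twice other than through the coincidence $p_0=p_n$, then $p$ decomposes into two cycles $p',p''$ of length $<n$ whose edge multisets together are exactly the edges of $p$, so $S[p]=S[p']\,S[p'']$. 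If instead $p$ is a simple cycle carrying a chord, cutting $p$ along that chord yields two cycles $p',p''$ of length $<n$ whose edge multisets together consist of the edges of $p$ plus the chord counted twice; since the two chord factors multiply to $S(x,y)S(y,x)=S(x,y)^2=1$ we again get $S[p]=S[p']\,S[p'']$. In both cases the induction hypothesis gives $S[p']=S[p'']=1$, hence $S[p]=1$. It therefore remains to treat a chordless simple cycle $p$ of length $n\ge 3$.

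For such a $p$ the vertices $p_0,\dots,p_{n-1}$ are pairwise distinct, so \eqref{eq:equivalentkernels} applied to the tuple $(p_0,\dots,p_{n-1})$ gives $\det(K(p_i,p_j))_{i,j=0}^{n-1}=\det(Q(p_i,p_j))_{i,j=0}^{n-1}$. Chordlessness means $K(p_i,p_j)=Q(p_i,p_j)=0$ whenever $i\neq j$ and $\{i,j\}$ is not one of the cyclically consecutive pairs $\{0,1\},\dots,\{n-2,n-1\},\{n-1,0\}$; that is, both matrices are symmetric periodic tridiagonal matrices. Expanding the two determinants by the Leibniz formula, a permutation $\sigma$ contributes a nonzero term only when $\sigma(i)$ equals $i$ or a cyclic neighbour of $i$ for every $i$, and such $\sigma$ are precisely the products of disjoint transpositions of cyclically adjacent indices (the identity included) together with the two cyclic shifts $i\mapsto i\pm 1$: writing $\sigma$ as a product of disjoint cycles, any $\sigma$-cycle of length at least $3$ is a genuine cycle in the $n$-cycle graph on $\{0,\dots,n-1\}$, which forces it to span all $n$ indices and to be one of the two shifts. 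For every $\sigma$ of the involutive type the associated term is, up to sign, a product of diagonal entries $K(p_i,p_i)$ and of factors $K(p_i,p_{i+1})K(p_{i+1},p_i)=K(p_i,p_{i+1})^2$, which by \eqref{eq:firstfundamental} and \eqref{eq:secondfundamental} are unchanged on replacing $K$ by $Q$; hence all such terms cancel in the difference of the two determinants. Each of the two shifts contributes $\sgn(\sigma)\prod_{i=0}^{n-1}K(p_i,p_{i+1})=(-1)^{n-1}\prod_{i=0}^{n-1}K(p_i,p_{i+1})$ (using the symmetry of $K$ for the decreasing shift), and likewise for $Q$, so that
\[
0=\det(K(p_i,p_j))_{i,j=0}^{n-1}-\det(Q(p_i,p_j))_{i,j=0}^{n-1}=2(-1)^{n-1}\Bigl(\prod_{i=0}^{n-1}K(p_i,p_{i+1})-\prod_{i=0}^{n-1}Q(p_i,p_{i+1})\Bigr).
\]

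If $\mathbb{F}$ has characteristic $2$ then $K=Q$ by Corollary \ref{cor:char2}, hence $S\equiv 1$ and $S[p]=1$; otherwise $2\neq 0$ in $\mathbb{F}$ and the identity above forces $\prod_{i=0}^{n-1}K(p_i,p_{i+1})=\prod_{i=0}^{n-1}Q(p_i,p_{i+1})$. Since each factor $K(p_i,p_{i+1})$ is nonzero (the $p_i$ form a cycle in $G$), dividing and using \eqref{eq:defS} and \eqref{eq:defS[p]} gives $S[p]=\prod_{i=0}^{n-1}S(p_i,p_{i+1})=1$, which closes the induction. The heart of the argument is the determinant computation in the middle paragraph: once one knows that, apart from the two cyclic shifts, every nonvanishing permutation in the expansion of a symmetric periodic tridiagonal determinant contributes a monomial in the diagonal entries and the squared off-diagonal entries only, the fundamental lemma kills everything but the two shift terms and the remainder is bookkeeping. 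I expect the main obstacle to be precisely this combinatorial analysis of the periodic tridiagonal determinant, together with carefully isolating the small cases $n\le 2$, where the determinant identity degenerates to a triviality and must be handled by hand.
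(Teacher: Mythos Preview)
Your argument is correct and follows essentially the same route as the paper: induction on the length, splitting non-chordless cycles into shorter ones, and for chordless simple cycles expanding the determinant over permutations, observing that only involutions and the two full cyclic shifts survive, and cancelling the involutive terms via \eqref{eq:firstfundamental}--\eqref{eq:secondfundamental}. The only organizational differences are that you dispatch the characteristic~$2$ case at the end rather than the beginning, and you treat $n\le 2$ as explicit base cases before invoking the determinant argument for $n\ge 3$; the latter is in fact slightly cleaner, since for $n=2$ the two cyclic shifts coincide with the unique transposition and the determinant computation degenerates.
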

\begin{proof}
Note that if $\mathbb{F}$ has characteristic 2, then this result is trivial by Corollary \ref{cor:char2}. For the rest of this proof we therefore assume that $\mathbb{F}$ does not have characteristic 2.

Now let $p=\{p_i\}_{i=1}^n$ be a cycle in the equivalent kernel graph $G$. We prove by induction on the length $n$ that $S[p]=1$. For $n=1$, we have $p_0=p_1$ and hence $S[p]=S(p_0,p_1)=1$ by \eqref{eq:Sondiagonal}. This establishes the induction basis.

Now suppose that $n\geq 2$ and we have proven that $S[q]=1$ for all cycles $q$ of length $k<n$. There are three different cases that cover all possibilities:
\begin{enumerate}
\item $p$ is not a simple cycle;
\item $p$ is a simple cycle, but not chordless;
\item $p$ is a chordless simple cycle.
\end{enumerate}
We show that in all three cases, we have that $S[p]=1$, which together establishes the induction step. We already note in advance that we only need the induction hypothesis for the first two cases.

\paragraph{The first case.}
If $p$ is not a simple cycle, then there are two integers $i$ and $j$ such that $0\leq i<j\leq n-1$ and $p_i=p_j$. Then define two new cycles:
\[q=(p_0,p_1,\dots,p_i,p_{j+1},p_{j+2},\dots,p_n),\]
\[q'=(p_i,p_{i+1},\dots,p_j).\]
We have that $q$ and $q'$ are both cycles since $p_i=p_j$, and the length of both $q$ and $q'$ are strictly smaller than $n$, so by the induction hypothesis $S[q]=S[q']=1$. Furthermore, by rearranging the factors in the defining equation \eqref{eq:defS[p]}, we see that
\[S[p]=S[q]\cdot S[q']=1,\]
which concludes the proof for this case.

\paragraph{The second case.}
If $p$ is a simple cycle that is not chordless, there are two integers $i$ and $j$ such that $0\leq i,j \leq n-1$, $i<j-1$, $(i,j)\neq (0,n-1)$ and $(p_i,p_j)$ is an edge in $G$. Again, we define two new cycles
\[r=(p_0,p_1,\dots,p_i,p_j,p_{j+1},\dots,p_n),\]
\[r'=(p_i,p_{i+1},\dots,p_j,p_i).\]
It is clear that the lengths of $r$ and $r'$ are both smaller than $n$ and hence  $S[r]=S[r']=1$ by the induction hypothesis. We note that $(p_i,p_j)$ is not an edge in $p$, but appears in both $r$ and $r'$, in opposite directions. Furthermore, the rest of the edges in $p$ appear precisely once in either $r$ or $r'$, but not in both. These make up all edges of $r$ and $r'$, whence by the definition \eqref{eq:defS[p]} we have
\[S[r]\cdot S[r'] =S[p]\cdot S(p_i,p_j) \cdot S(p_j,p_i).\]
by rearranging the factors. By Lemma \ref{lem:propertiesS} we know that
\[S(p_i,p_j) \cdot S(p_j,p_i) = S(p_i,p_j)^2=1,\]
and by combining all this we conclude that $S[p]=1$ too.

\paragraph{The third case.}
If $p$ is a chordless simple cycle, we prove that $K[p]=Q[p]$, which is equivalent to $S[p]=1$, by construction of $S$. For this, we do not need the induction hypothesis, but we extract the necessary information from the equation \eqref{eq:equivalentkernels} for the tuple $(p_1,\dots,p_n)$. Namely, we expand both sides of the equation as sums over permutations, such that we get 
\begin{equation}
\label{eq:expansionoverpermutations}
\sum_{\sigma \in S_n} \sgn(\sigma) \prod_{i=1}^n K(p_i,p_{\sigma(i)}) =\sum_{\sigma \in S_n} \sgn(\sigma) \prod_{i=1}^n Q(p_i,p_{\sigma(i)}).
\end{equation}
Since $p$ is a chordless simple cycle, the permutation $\sigma \in S_n$ only contributes to these sums if and only if $p_i$ and $p_{\sigma(i)}$ are either equal or neighbours in the cycle $p$, for all $1\leq i \leq n$; otherwise one of the factors is zero. This means that there are only two types of permutations $\sigma$ contributing to these sums:
\begin{enumerate}
\item $\sigma$ has only $1$- or $2$-cycles.
\item $\sigma=(1 \ 2 \ \cdots \ n)$ or $\sigma=(n \ n-1 \ \cdots \ 1)$. 
\end{enumerate}
If $\sigma$ is of the first type, it in fact only contributes if all 2-cycles connect two neighbours in $p$. Independent of this, if $\sigma$ is of the first type, we can apply \eqref{eq:firstfundamental} and \eqref{eq:secondfundamental} to all cycles of $\sigma$, such that we obtain
\[\prod_{i=1}^n K(p_i,p_{\sigma(i)}) = \prod_{i=1}^n Q(p_i,p_{\sigma(i)}).\]
If we subtract these contributions from \eqref{eq:expansionoverpermutations}, we only have the contributions of the permutations $\sigma=(1 \ 2 \ \cdots \ n)$ and $\sigma=(n \ n-1 \ \cdots \ 1)$ on both sides of the equation. In fact, by symmetry of $K$ and $Q$, these two contributions are the same, and we exactly arrive at $2K[p]=2Q[p]$. Since $\mathbb{F}$ does not have characteristic 2, we therefore have that $K[p]=Q[p]$, which concludes the proof.
\end{proof}

\section{Proof of Theorem \ref{thm:maintheorem}}
\label{sectiongauge}

Using Proposition \ref{prop:loopcondition}, we can now prove our main result. Note that in the proof the function $g_C$ is constructed similarly as the function $g$ in Remark \ref{rem:cocycle}.

\begin{proof}[Proof of Theorem \ref{thm:maintheorem}]
To prove the existence of the function $g$ such that \eqref{eq:gaugetransform} holds, we construct the function $g$ separately on the connected components of the equivalent kernel graph $G$. Namely, for every connected component $C$ of the equivalent kernel graph $G$, we construct a function $g_C: C\rightarrow \{\pm 1\}$ such that for every $x,y\in C$ that satisfy $(x,y)\in X^2 \setminus Z$, we have that $S(x,y)=g_C(x)g_C(y)$. Then, if we denote the connected component that contains a vertex $x$ by $[x]$, we can define $g: X \rightarrow  \{\pm 1\}$ by $g(x)=g_{[x]}(x)$. To see that this function $g$ satisfies \eqref{eq:gaugetransform}, we observe that if $(x,y)\in Z$, then $Q(x,y)=K(x,y)=0$, and hence \eqref{eq:gaugetransform} certainly holds for $(x,y)$. If we have $(x,y)\in X^2\setminus Z$, then $x$ and $y$ are in the same connected component (say $C$) of $G$ and hence
\[Q(x,y)=S(x,y) K(x,y)=g_C(x) g_C(y) K(x,y)= g(x) g(y) K(x,y),\]
whence \eqref{eq:gaugetransform} holds for all $(x,y)$. That proves Theorem \ref{thm:maintheorem}.

Therefore, we are left to consider a connected component $C$ of $G$ and  to construct the function $g_C: C \rightarrow \{\pm 1\}$ with the above properties. For this, fix a vertex $x_C$ in $C$. Now, take any vertex $y$ in $C$ and any path $p$ from $x_C$ to $y$. We claim that the value of $S[p]$ is independent of the path $p$, i.e., if $p'$ is another path from $x_C$ to $y$, then $S[p]=S[p']$. For this, suppose that $p$ has length $n$ and that $p'$ has the length $m$ and consider the path
\[q=(p_0,p_1,\dots,p_n,p'_{m-1},p'_{m-2},\dots,p'_0).\]
Indeed, this is a path precisely because $p_n=y=p'_m$ and in fact $q$ is a cycle in $x_C$. Therefore, we have
\[S[p]\cdot S[p'] = S[q]=1,\]
by Proposition \ref{prop:loopcondition}, so $S[p]=S[p']$, by Remark \ref{rem:pm1andowninverse} and \eqref{eq:valuesofS}. Then we can define
\[g_C(y)=S[p],\]
since this is independent of the choice of the path $p$ from $x_C$ to $y$. Now, suppose that $z,w\in C$ such that $(z,w)\in X^2 \setminus Z$. Then, take any path $r=\{r_i\}_{i=0}^l$ from $x_C$ to $z$. Such a path exists since $x_C$ and $z$ are in the same connected component of $G$. Next, define
\[r'=(r_0,r_1,\dots,r_l,w),\]
which is a path from $x_C$ to $w$. Then
\[g_C(w)=S[r']=S[r] \cdot S(z,w)=g_C(z)S(z,w),\]
so, since $g_C(z)$ is its own inverse, we have that
\[S(z,w)=g_C(z)g_C(w),\]
as desired. This concludes the proof.
\end{proof}

\begin{remark}
\label{rem:definiteuniqueness}
We can now give a complete answer to the question regarding the uniqueness of $g$, which was already addressed in Remark \ref{rem:uniquenessg}. Namely, if one takes a connected component $C$ and defines $\tilde{g}_C=-g_C$, then $\tilde{g}_C$ satisfies the same conditions as $g_C$. Hence, one can `change the sign' on all of the connected components separately. In fact, these are all the possible transformations $g \mapsto \tilde{g}$ that yield the same conjugation transformation. Namely, if one fixes the sign for the base point $x_C$ of a connected component, then that fixes the sign for all neighbouring points $y$ of $x_C$. By induction this fixes the sign on the whole connected component $C$. Hence, the possible choices of conjugation functions $g$ are labelled by the set of all functions $\operatorname{Conn}(G) \rightarrow \{\pm 1\}$, where $\operatorname{Conn}(G)$ denotes the set of connected components of $G$.
\end{remark}

\begin{appendices}
\end{appendices}

\section*{Acknowledgements}
I would like to thank Alexander Bufetov for introducing this interesting problem to me and for the discussions that followed. I am also grateful to Niels Bonneux and Arno Kuijlaars for carefully reading the manuscript. I am supported by EOS project 30889451 of the Flemish Science Foundation (FWO), the Belgian Interuniversity Attraction Pole P07/18 and partly by the long term structural funding-Methusalem grant of the Flemish Government.

\end{document}